\documentclass{amsart}
\title{Generic Large Cardinals as Axioms}
\author{Monroe Eskew}
\address{Kurt G\"odel Research Center \\
University of Vienna \\
W\"ahringer Strasse 25 \\
1090 Wien \\
Austria \\
monroe.eskew@univie.ac.at.}

\date{}

\thanks{The author would like to thank Sean Walsh and Neil Barton for many helpful discussions.}

\usepackage{color} 
\usepackage{amssymb} 
\usepackage{amsmath} 
\usepackage[utf8]{inputenc} 
\usepackage{enumerate}
\usepackage{amsthm}
\usepackage{cite}

\newtheorem{theorem}{Theorem}
\newtheorem{lemma}[theorem]{Lemma}

\newtheorem{corollary}[theorem]{Corollary}

\newtheorem*{definition}{Definition}

\DeclareMathOperator{\cf}{cf}

\DeclareMathOperator{\col}{Col}

\DeclareMathOperator{\ns}{NS}

\newcommand{\p}{\mathcal{P}}

\begin{document}
\maketitle


\begin{abstract}
We argue against Foreman's proposal to settle the continuum hypothesis and other classical independent questions via the adoption of generic large cardinal axioms.
\end{abstract}

Shortly after proving that the set of all real numbers has a strictly larger cardinality than the set of integers,
Cantor conjectured his Continuum Hypothesis (CH):
that there is no set of a size strictly in between that of the integers and the real numbers \cite{cantor}.  A resolution of CH was the
first problem on Hilbert's famous list presented in 1900 \cite{hilbert}.  G\"odel made a major advance by constructing a model of the Zermelo-Frankel (ZF) axioms for set theory
in which the Axiom of Choice and CH both hold, starting from a model of ZF.  This showed that the axiom system ZF, if consistent on its own, could not disprove Choice, and that ZF
with Choice (ZFC), a system which suffices to formalize the methods of
ordinary mathematics, could not disprove CH \cite{godelL}.  It remained unknown at the time whether models of ZFC could be found in which CH was false, but G\"odel began
to suspect that this was possible, and hence that CH could not be settled on the basis of the normal methods of mathematics.
G\"odel remained hopeful, however, that new mathematical axioms known as ``large cardinals'' might be
able to give a definitive answer on CH \cite{godel}.

The independence of CH from ZFC was finally solved by Cohen's invention of the method of forcing \cite{cohen}.
Cohen's method showed that ZFC could not prove CH either, and in fact 
could not put any kind of bound on the possible number of cardinals between the sizes of the integers and the reals.  L\'evy and Solovay further developed
the forcing machinery, and noticed that it also destroyed G\"odel's hopes for large cardinals.  Forcing allowed one to manipulate the cardinal value of the set 
of reals, passing from one model of ZFC to another giving a different answer on CH, without disturbing any large cardinals in the process \cite{levy}.

This was not the last word on CH from the community of set theorists.  Several programs to develop acceptable axioms that settle CH have been put forward.  Matthew Foreman, a leading set theorist, has suggested a solution to CH via axioms called ``generic large cardinals.''
Our goal here is to critically examine Foreman's proposal.  First, we describe the goals these axioms are supposed to meet and the kinds of considerations in their favor, highlighting the claim that the favorable considerations for traditional large cardinals transfer to the generic ones.  Second, we discuss many technical difficulties in accommodating generic large cardinals in a single axiomatic framework, and present some new ``mutual inconsistency results'' that raise troubles for the program.  Third, we examine the considerations in favor of traditional large cardinals and argue that they do not have the same import for the generic variety.  Finally, we consider an alternative take on these kinds of axioms that seems to avoid the technical difficulties, but sacrifices some of the original goals of the program.

\section{Foreman's program}
In accord with G\"odel \cite{godel}, Foreman regards CH as a legitimate and important mathematical problem, despite its independence from ZFC.  
In Foreman's view, such problems may be resolved by the adoption of new axioms, a method which is in line with the historical practice of mathematics, as 
``whatever process led to the acceptance of ZFC as an axiomatization of mathematics (despite its controversial beginnings) may lead to other assumptions that
settle, or partially settle most of the problems we are interested in'' \cite{GLC}.

Of course, the everyday mathematical work of performing calculations and proving theorems is not the same kind of thing as making a commitment to a new axiom.
So how does one assess axiom candidates?  Foreman lays out two categories of evidence, which he calls ``primary considerations'' and ``secondary considerations'' \cite{foremanphil}.  Roughly, primary considerations have
to do with the conceptual content of the axioms, while secondary considerations have to do with various utilitarian features of the theories they generate.
Included among these features is \emph{completness}, the effectiveness of the axiom system at answering questions.  We will not dispute the claim that generic large cardinals fare particularly well with respect to many secondary considerations.  Indeed, they are able to settle several classical questions about relatively small cardinals, including CH, and they have many of the same implications for descriptive set theory as conventional large cardinals.\footnote{See \cite{foremanhandbook}.}

What Foreman describes as primary considerations seem to be closely linked to ``intrinsic justifications,''\footnote{See \cite{maddy}.} but perhaps also include considerations of ``naturalness'' that may not count as justifications in the sense of valid arguments for conclusions.  G\"odel suggests something along these lines, saying that large cardinals are ``axioms which are only the natural continuation of the series of those set up so far" \cite{godel}.  
Foreman argues that the primary considerations for traditional large cardinals transfer to the generic large cardinals.  Since many set theorists accept large cardinals as intrinsically justified or natural, the implication is that they should therefore view generic large cardinals in the same way.  Our aim in this paper is to argue against this transference claim, without attempting a thorough analysis of the underlying notion of ``primary consideration.''  We will first argue that the transference claim leads to unwelcome consequences, and then critically examine whether some of the well-known primary considerations for traditional large cardinals do indeed apply equally to generic ones.


Foreman describes large cardinals as ``a successful axiom system''  \cite{foremanphil}.  With regard to primary considerations in their favor, Foreman says little in \cite{foremanphil} beyond 
noting the ``sociological fact that the dominant view among those actively searching for true axioms that extend ZFC is that Large Cardinal axioms are true.''  This is reminiscent of an argument for the Axiom of Choice made by Zermelo, who noted that ``it is applied without hesitation everywhere in mathematical deduction" \cite{zermelo}.
In other words, specialists in the relevant fields demonstrate in their mode of working that the principles in question are natural or intuitive. 
Foreman says the generic large cardinals are ``straightforward generalizations of conventional large cardinals," and that the ``evidence for large cardinals, when suitably viewed, does not distinguish between conventional large cardinals and generic large cardinals'' \cite{foremanphil}.  Indeed, Foreman gives a characterization of generic large cardinals that includes many conventional large cardinals as special cases.

A key mathematical difference between the conventional and generic large cardinals is the \emph{mutual inconsistency phenomenon} observed with the generic variety.  There are pairs of generic large cardinal axioms that contradict one another. In some cases, both members of the pair are known to be consistent relative to conventional large cardinals.  Since the conventional large cardinals form a directed order in strength, such phenomena are not possible for the conventional large cardinals that have been studied.  If a pair of them were to be inconsistent with each other, then a single large cardinal axiom would be inconsistent by itself.  This is because for any finite collection $S$ of large cardinal axioms, one only needs to look a bit further up the hierarchy to find an axiom which implies the existence of a model satisfying all statements in $S$.\footnote{It should be noted that this is not a meta-theorem about an abstract notion of large cardinals, but more of an empirical fact about the axioms that have been studied.  Indeed, there is no generally accepted formal definition of what a large cardinal is, though there is certainly enough resemblance and coherence between the extant axioms to warrant the use of a general term.}

What are we to make of this situation?  If we are seeking to adopt a new axiom system to resolve independent questions, we certainly should not adopt an inconsistent one.  According to Foreman \cite{foremanhandbook}, ``While the counterexample to mutual consistency is certainly very troubling, it may not be fatal to the program of looking to generalized large cardinals for true extensions of ZFC.''  He states that ``the `mutual inconsistency phenomenon' seems rare," and that the important question is, ``which generalized large cardinals are true.''  In other words, he predicts that a deeper understanding of the situation will yield reasons to accept some generic large cardinals as axioms and reject others.


A straightforward acceptance of the transference claim immediately leads us into problems.  For if everything worthy of the name ``generic large cardinal" is as deserving of axiomhood as conventional large cardinals, 
then we have several equally deserving axiom candidates, which cannot be adopted simultaneously.  We cannot accept a simple principle such as ``generic large cardinal axioms are true,'' and we seem to be left with no alternative principle that favors some generic large cardinals over others.
But perhaps the primary considerations in favor of conventional large cardinals do not legitimately apply to some hypotheses we currently call ``generic large cardinals."
If the mutual inconsistency phenomenon is indeed quite rare, then it becomes plausible that a more refined understanding of the considerations in favor of conventional and generic large cardinals may resolve the problem by guiding us to a choice between conflicting axioms, perhaps by informing a scheme for what counts as a generic large cardinal axiom that excludes problematic cases.
This could be guided by new mathematical information, as Kunen's proof of the nonexistence of ``Reinhardt cardinals'' led to a maturation of the large cardinal theory rather than its collapse \cite{srk}.
However, we aim to show in the next section, by collecting some known results and presenting some new ones, that the phenomenon may be unsettlingly common, and hence the prospects for the program may be bleak.

\section{Inconsistencies}
Without attempting a comprehensive definition, Foreman in \cite{GLC} characterizes generic large cardinals as axioms that ``assert the existence of an elementary embedding $j : V \to M$, where $j$ is definable in a forcing extension of the universe $V[G]$.''  ($M$ is assumed to be a transitive class definable in $V[G]$.)  Conventional large cardinals (at least those of sufficiently high strength) fit the same description, except that they do not allow $j$ to be generated by forcing.

With no restrictions on the hypotheses of this kind that one may consider as axiom candidates, we quickly run into inconsistencies.  For example, there is a relationship between the kind of forcing used to generate the embedding $j$ and the possible critical point of $j$.  The kind of forcing employed can dictate the allowable critical points, and a stipulation of a certain critical point can restrict the class of employable forcings.  If we stipulate the critical point to be a successor cardinal, then this cardinal must be collapsed by the forcing, since if $\kappa = \mu^+$ is the least ordinal moved by $j : V \to M$, then $M \models \mu < \kappa < j(\kappa) = \mu^+.$  If we stipulate the forcing to have the countable chain condition, then the critical point must be weakly inaccessible and at most the value of the continuum.\footnote{See \cite{jech} or  \cite{kanamori}.}
Since some generic large cardinals imply the continuum is not so large, we immediately get some mutual inconsistencies.

A less obvious restriction on generic embeddings occurs near singular cardinals.  Since any forcing generating an elementary embedding with critical point a successor cardinal $\kappa = \mu^+$ must collapse $\kappa$ to have the same cardinality as $\mu$, the best chain condition one can get for the forcing is the $\kappa^+$-c.c., and thus the minimal possible size of the forcing is $\kappa$.  The former property is equivalent to the existence of a $\kappa^+$-saturated normal ideal on $\kappa$, and the latter a $\kappa$-dense normal ideal on $\kappa$.  These objects are the combinatorial surrogates for the kinds of generic embeddings they generate, in analogy to the countably complete ultrafilters on various sets associated to various kinds of conventional large cardinal embeddings.  These objects are often referred to by the abridged names, ``saturated ideal'' and ``dense ideal'' on $\kappa$.

A dense ideal on $\omega_1$ is known to be equiconsistent with infinitely many Woodin cardinals \cite{woodin-book}, and for a general successor of a regular cardinal, a model can be obtained from an almost-huge cardinal \cite{dense}.  Foreman \cite{foremanmore} obtained a model of ZFC in which \emph{every} successor cardinal carries a saturated ideal, and therefore a natural question is whether one can obtain the stronger property of a dense ideal on $\kappa$, where $\kappa$ is the successor a singular cardinal.  In \cite{dense}, the author showed that this contradicts the generalized continuum hypothesis (GCH).  Using a lesser-known forcing lemma, we easily obtain a contradiction without GCH:

\begin{lemma}[Sakai \cite{sakaisemiproper}]
If $\mathbb P$ is a partial order of size $\kappa$, where $\kappa$ is a regular cardinal, then $\mathbb P$ forces $\cf(\kappa) = | \kappa |$.
\end{lemma}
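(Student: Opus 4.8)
The plan is to argue in a generic extension $V[G]$ and, starting from a cofinal map $f\colon\mu\to\kappa$ with $\mu=\cf^{V[G]}(\kappa)$, to produce a surjection of $\mu$ onto $\kappa$ lying in $V[G]$; this yields $|\kappa|^{V[G]}\le\mu=\cf^{V[G]}(\kappa)\le|\kappa|^{V[G]}$, so all three agree and the lemma follows. If $\mu=\kappa$ there is nothing to do, since then $\kappa$ is already regular in $V[G]$; so assume $\mu<\kappa$.

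First I would fix $p_0\in G$ and a name $\dot f$ with $p_0\Vdash$ ``$\dot f\colon\check\mu\to\check\kappa$ is cofinal and increasing'', $\mu$ being the true value of $\cf^{V[G]}(\kappa)$. One may assume $|\mathbb P\restriction q|=\kappa$ for every $q\le p_0$: if some $q\le p_0$ had $|\mathbb P\restriction q|<\kappa$, then forcing below $q$, being of size $<\kappa$, would be $\kappa$-c.c.\ and hence would preserve the regularity of $\kappa$, contradicting that $q$ forces $\cf(\check\kappa)\le\check\mu<\check\kappa$.

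The place where the hypothesis $|\mathbb P|=\kappa$ is used is the following. For $\alpha<\mu$ put $B_\alpha=\{\beta<\kappa:(\exists q\le p_0)\ q\Vdash\dot f(\check\alpha)=\check\beta\}$; choosing one witnessing condition per value injects $B_\alpha$ into $\mathbb P$, so $|B_\alpha|\le|\mathbb P|=\kappa$. If every $B_\alpha$ were bounded in $\kappa$ then $\la\sup B_\alpha:\alpha<\mu\ra$ would lie in $V$ and be cofinal in $\kappa$ of length $\mu<\kappa$, contradicting the regularity of $\kappa$ in $V$. So some $B_{\alpha_0}$ is unbounded in $\kappa$, hence of order type $\kappa$; fixing a maximal antichain $A$ below $p_0$ whose members each decide $\dot f(\check\alpha_0)$ gives $|A|=\kappa=|\mathbb P|$ together with a surjection of $A$ onto the unbounded set $B_{\alpha_0}$. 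Since below each member of $A$ the forcing again has size $\kappa$ and still forces $\dot f$ cofinal, this dichotomy recurs, and the idea is to iterate it hereditarily so as to build a recursive scheme from which the generic filter $G$ defines a surjection of $\mu$ (or of $\mu\times\mu$) onto $\kappa$, contradicting $\mu<|\kappa|^{V[G]}$.

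The hard part is this last extraction. At limit stages of the recursion the descending chains of conditions involved need not have lower bounds in $\mathbb P$, since $\mathbb P$ is not assumed closed, so the scheme must be arranged so that the ordinals $G$ reads off at limit stages are forced to cohere into an honest enumeration. This is the step that must genuinely use $|\mathbb P|\le\kappa$: the same configuration is consistent for posets of size $2^\kappa$ — for instance, Prikry forcing at a measurable $\kappa$ singularizes $\kappa$ without collapsing it — so something beyond the chain condition and the value-set dichotomy is needed here, and it is on this step that I expect the bulk of the work.
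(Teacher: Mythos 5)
The paper does not prove this lemma; it is quoted from Sakai's paper with a citation, so there is no in-text argument to compare yours against. Judged on its own, your proposal is not a proof: it consists of correct but routine preliminary reductions followed by an explicit deferral of the main step. The reductions are fine --- restricting to the case $\mu=\cf^{V[G]}(\kappa)<\kappa$, normalizing so that $|\mathbb P\restriction q|=\kappa$ below $p_0$, and the dichotomy that some $B_{\alpha_0}$ must be unbounded in $\kappa$ (else $\langle\sup B_\alpha:\alpha<\mu\rangle\in V$ would witness singularity of $\kappa$ in $V$), whence a maximal antichain of deciders of $\dot f(\check\alpha_0)$ has size $\kappa$. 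But none of this moves toward the conclusion. What is needed is a surjection from $\mu$ onto $\kappa$ \emph{in $V[G]$}, and the generic filter realizes only one value from $B_{\alpha_0}$ and meets each antichain in one point; reading $G$ off against any tree of antichains built by "iterating the dichotomy hereditarily" produces only $\mu$ many ordinals, i.e., reproves that $\cf^{V[G]}(\kappa)=\mu$, which was the hypothesis. No mechanism is described for attaching to \emph{each} $\gamma<\kappa$ a code of size $\mu$ visible in $V[G]$, and it is not apparent that one exists along the lines you sketch.

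Your own closing remark is the correct diagnosis: the value-set dichotomy and the antichain count hold equally for posets of size $2^\kappa$ such as Prikry forcing, which singularizes a measurable $\kappa$ without collapsing it, so everything you have actually established is compatible with the negation of the conclusion. The essential use of $|\mathbb P|\le\kappa$ --- the entire content of Sakai's lemma, and the reason the paper calls it a "lesser-known forcing lemma" rather than a folklore fact --- is exactly the step you label "the hard part" and do not supply. As written, the proposal is a correct setup plus an acknowledged gap where the theorem lives; it should not be counted as a proof.
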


\begin{corollary}
If $\kappa$ is the successor of a singular cardinal, then there is no dense ideal on $\kappa$.
\end{corollary}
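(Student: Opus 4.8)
The plan is a short proof by contradiction that combines Sakai's Lemma with the standard closure properties of the generic ultrapower attached to a saturated ideal. So suppose $\mu$ is singular, $\kappa=\mu^+$, and $I$ is a dense ideal on $\kappa$; fix a dense set $D\subseteq \mathbb{P}:=\mathcal{P}(\kappa)/I$ with $|D|=\kappa$. A dense set of size $\kappa$ implies every antichain of $\mathbb{P}$ has size at most $\kappa$, so $I$ is in particular $\kappa^+$-saturated, hence precipitous; thus forcing with $\mathbb{P}$ (equivalently, with $D$, since $D$ is dense in $\mathbb{P}$) yields a well-founded generic ultrapower embedding $j\colon V\to M\subseteq V[G]$, which has critical point $\kappa$.

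The first step is to compute $|\kappa|$ in $V[G]$. Since $\mu<\kappa=\crit(j)$ we have $j(\mu)=\mu$, so $M\models \kappa<j(\kappa)=\mu^+$; thus $M$ contains a surjection of $\mu$ onto $\kappa$, and as $M\subseteq V[G]$ we get $V[G]\models |\kappa|\le\mu$. To see that $\mu$ itself is not collapsed, I would use the closure property ${}^{<\kappa}M\cap V[G]\subseteq M$ of the generic ultrapower of a $\kappa^+$-saturated normal ideal on $\kappa$ (equivalently, $M$ is closed under $\mu$-sequences from $V[G]$): if $\mu$ were collapsed in $V[G]$, a surjection from some ordinal $\nu<\mu$ onto $\mu$ would be such a short sequence, hence an element of $M$, so that $M\models$``$\mu$ is not a cardinal''; but $M\models$``$j(\mu)=\mu$ is a cardinal'' by elementarity, a contradiction. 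Hence $\mu$ remains a cardinal in $V[G]$ and $|\kappa|^{V[G]}=\mu$. Moreover $\mu$ is still \emph{singular} in $V[G]$, since cofinalities never increase under forcing: $\cf^{V[G]}(\mu)\le\cf^{V}(\mu)<\mu$.

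Finally I would apply Sakai's Lemma to the poset $D$, which has size $\kappa$ with $\kappa$ regular: it forces $\cf(\kappa)=|\kappa|$. Combining this with the previous paragraph gives $\cf^{V[G]}(\kappa)=|\kappa|^{V[G]}=\mu$. But a cofinality is always a regular cardinal of its model, so $\mu$ would be regular in $V[G]$, contradicting its singularity there. This finishes the argument.

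The only genuinely nontrivial input is the closure property ${}^{<\kappa}M\cap V[G]\subseteq M$, which is exactly what is needed to guarantee that $\mu$ survives the forcing; everything else is elementarity and bookkeeping with the cited Lemma. (Singularity of $\mu$ is used only at the last step: when $\mu$ is regular, $\cf^{V[G]}(\mu)=\mu$ is consistent with everything derived above, matching the known relative consistency of dense ideals on successors of regular cardinals.)
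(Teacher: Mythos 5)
Your proof is correct and follows essentially the same route as the paper: use the generic embedding from the dense ideal together with elementarity and the closure of $M$ to see that $\kappa$ acquires cardinality $\mu$ in $V[G]$ while $\mu$ stays a (singular) cardinal, then contradict Sakai's Lemma applied to the size-$\kappa$ dense set. The only differences are expository (you spell out precipitousness and the preservation of $\mu$, and phrase the final contradiction as ``$\cf(\kappa)=\mu$ would make $\mu$ regular'' rather than ``$\cf(\kappa)<\mu=|\kappa|$''), so there is nothing substantive to change.
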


\begin{proof}
Suppose $\mu$ is singular, $\kappa = \mu^+$, and there is a dense ideal on $\kappa$.
 Let $j : V \to M \subseteq V[G]$ be a generic embedding arising from forcing with the ideal.  The critical point of $j$ is $\kappa$ and $M^\kappa \cap V[G] \subseteq M$.  By elementarity and the closure of $M$, $|\kappa | = \mu$ and $\cf(\kappa) < \mu$ in $V[G]$.  This contradicts Sakai's Lemma since $G$ is generic for a forcing of size $\kappa$.
 \end{proof}

Deeper restrictions on generic elementary embeddings come from work regarding the canonical \emph{nonstationary ideal} on a regular cardinal $\kappa$, denoted $\ns_\kappa$.
Work of Gitik and Shelah \cite{gsless} shows that is not possible for $\ns_\kappa$ to be saturated unless $\kappa = \aleph_1$, which is consistent relative to a Woodin cardinal \cite{woodin-book}.  For various restricted versions of this, the relative consistency is open--for example the statement that $\ns_{\aleph_2}$ restricted to ordinals of cofinality $\aleph_1$ is $\aleph_3$-saturated.  However, Foreman and Magidor \cite{fmcx} proved that if there are sufficient conventional large cardinals in the background, then the above statement about $\ns_{\aleph_2}$ contradicts the $\aleph_2$-saturation of $\ns_{\aleph_1}$.  In particular, these hypotheses give different answers regarding whether a counterexample to CH exists in the inner model $L(\mathbb R)$.

With regard to the inconsistencies involving the nonstationary ideal, Foreman has a response.  He characterizes the $\aleph_2$-saturation of $\ns_{\aleph_1}$ as ``an anomaly that does not fit the general situation'' \cite{foremanhandbook}, suggesting that a clearer picture of the mathematical situation may lead to a refined formulation of the notion of ``generic large cardinal'' that does not allow this contradiction to go through.  This leads Foreman to an ``informal working definition'' of a generic large cardinal hypothesis as an assertion of the forcability of an elementary embedding $j : N \to M$ for transitive structures $N$ and $M$, with three paramters:
\begin{enumerate}
 \item Where $j$ sends the ordinals.
 \item How big $N$ and $M$ are.
 \item The nature of the forcing.
\end{enumerate}
As he states, ``This mechanism appears to define away the anomaly of $\ns_{\omega_1}$,'' since there is no place in this framework for asserting that the generic embedding is related to a particular definable ideal.  He clarifies that by ``the nature of the forcing,'' he means its isomorphism type as a boolean algebra.  The assertion, ``such-and-such isomorphism type can be represented as such-and-such,'' is not a permitted part of a generic large cardinal hypothesis.

This informal defintion does not have enough information to exclude inconsistent cases, as Foreman notes that ``one cannot adjust these parameters arbitrarily.''  As mentioned above, there is an interplay between axes (1) and (3).  There are obvious interplays between (1) and (2), as making $N$ and $M$ ``larger'' tends to introduce more absoluteness and thus restrict the action of $j$.
There are also limitations regarding (2) analogous to Kunen's inconsistency.\footnote{See \cite{foremanhandbook}, Section 6.2.}
Nonetheless, one might reasonably hope that we can specify some consistent general framework that captures most of the instances of generic large cardinals that have been studied.

Whatever underlying concept motivates Foreman's working definition, it does appear that there are instances of generic large cardinals falling under the concept that are mutually contradictory.  Foreman mentions two examples that he finds troubling.  The first example shows that the following ``particularly attractive sounding axiom'' asserting the existence of a family of ideals is false:
``For all $\aleph_2$-c.c.\ Boolean algebras $\mathbb B$ of cardinality less than or equal to $2^{\omega_2}$ that collapse $\omega_1$, there is a normal fine ideal $I$ on $[\omega_2]^{\omega_1}$ such that $\p([\omega_2]^{\omega_1})/I \cong \mathbb B$.''

If we substitute $\mathbb B = \col(\omega,\omega_1)$, then we can prove CH,\footnote{See \cite{foremanhandbook}, Section 5.3.} and that there is also an ideal $J$ on $\omega_1$ such that $\p(\omega_1)/J \cong \col(\omega,\omega_1)$.  These two hypotheses together prove a certain partition property $P$.\footnote{$P$ asserts that whenever we have a coloring of the rectangle $\omega_2 \times \omega_1$ in countably many colors, there are sets $A \subseteq \omega_2$ and $B \subseteq \omega_1$, both uncountable, such that the coloring is constant on $A \times B$.}
If we substitute $\mathbb B = \col(\omega,{<}\omega_2)$, then from this hypothesis we can prove $\neg P$.  Although the existence of \emph{any} $\aleph_2$-saturated normal ideal on $[\omega_2]^{\omega_1}$ is not known to be consistent relative to large cardinals, Foreman notes that the same argument can be carried out on the basis of the conjunction of CH with two weaker ideal hypotheses known to be individually consistent (with GCH) relative to conventional large cardinals.\footnote{These are a dense ideal on $\omega_1$ and a normal ideal on $[\lambda]^{\omega_1}$ with associated forcing $\col(\omega,{<}\lambda)$, for some inaccessible $\lambda$.}

In \cite{rims}, the author generalized this result as follows, allowing us to drop the hypothesis of CH from the original case:
\begin{theorem}Suppose $\kappa$ is a successor cardinal, there is a $\kappa$-complete, $\kappa$-dense ideal on $\kappa$, and $\lambda > \kappa$ is a cardinal. 
 \begin{enumerate}
  \item If $\lambda$ is a successor cardinal, there is no normal ideal on $[\lambda]^\kappa$ whose associated forcing is $\lambda$-absolutely $\lambda$-c.c.\ and uniformly $\lambda$-dense.
  \item If $\lambda$ is a limit cardinal, then there is no $\lambda^+$-saturated normal ideal on $[\lambda]^\kappa$.
 \end{enumerate}
\end{theorem}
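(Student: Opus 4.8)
The plan is to run the two ideal hypotheses through generic elementary embeddings and extract a contradiction, generalizing Foreman's argument for the special case $\kappa=\aleph_1$, $\lambda=\aleph_2$. Write $\kappa=\mu^+$. The first observation is that $\mu$ must be regular: by the preceding Corollary there is no $\kappa$-complete $\kappa$-dense ideal on the successor of a singular cardinal, so this is already forced by the hypotheses.

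Next I would record what the $\kappa$-complete $\kappa$-dense ideal on $\kappa$ provides: a forcing $\mathbb P$ of size $\kappa$ carrying a $\mathbb P$-name for a generic elementary embedding $j:V\to M\subseteq V[G]$ with $\crit(j)=\kappa$ and $M^{<\kappa}\cap V[G]\subseteq M$ (using $\kappa$-completeness together with the $\kappa^+$-c.c.\ of $\mathbb P$). Since $M\models\kappa<j(\kappa)=\mu^{+M}$, the cardinal $\kappa$ is collapsed to $\mu$ in $V[G]$, while every $V$-cardinal $\geq\kappa^+$ — in particular $\lambda$ — is preserved. In other words, the dense ideal on $\kappa$ supplies a \emph{canonical small collapse} of $\kappa$ that still carries a strong embedding; this will be the ``reflecting'' ingredient.

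Now suppose toward a contradiction that $J$ is a normal ideal on $[\lambda]^\kappa$ of the stated type, with quotient $\mathbb Q$. Forcing with $\mathbb Q$ yields a generic elementary $k:V\to N$ with $\crit(k)=\kappa$, and the parameters on $J$ control the rest of the picture. In case (1), ``$\lambda$-absolutely $\lambda$-c.c.'' keeps $\lambda$ a cardinal and gives $N^{<\lambda}\cap V[H]\subseteq N$, while ``uniformly $\lambda$-dense'' forces $k(\kappa)=\lambda$ and, crucially, that $\mathbb Q$ stays $\lambda$-c.c.\ even after the small forcing $\mathbb P$ — this last point is essential, since $|\mathbb P|=\kappa$ equals the completeness of $J$, so L\'evy--Solovay does not directly apply. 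In case (2), ``$\lambda^+$-saturated'' gives $N^\lambda\cap V[H]\subseteq N$, so $N$ computes cardinals correctly up through $\lambda$. Applying $k$ to the statement ``there is a $\kappa$-complete $\kappa$-dense ideal on $\kappa$'' gives $N\models$``there is a $k(\kappa)$-complete $k(\kappa)$-dense ideal on $k(\kappa)$,'' and, as $\crit(k)=\kappa=\mu^+$, elementarity yields $N\models k(\kappa)=\mu^+$. One then confronts this with the $\mathbb P$-side: in case (1), $k(\kappa)=\lambda$, so $N$ sees a $\lambda$-dense ideal on $\lambda$ and believes $\lambda=\mu^+$; running the small collapse of the previous paragraph inside $N$ (forcing below $k$ with $k(\mathbb P)$, a poset of size $\lambda$) and pairing it with the collapse-like behavior of $\mathbb Q$ over $V$ produces a polarized partition relation for $(\lambda,\kappa)$ together with its negation, the former coming from the reflecting property of the dense ideal and the latter from $\mathbb Q$, exactly as in Foreman's original argument. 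In case (2), the $N^\lambda$-closure pins $\lambda$ down as a genuine cardinal of $N$ with the same cofinality it has in $V[H]$, while the dense-ideal side forces $\lambda$ to be the successor of $\mu$, collapsing all $V$-cardinals in $(\kappa,\lambda)$; this is incompatible with $\lambda$ being a limit cardinal, and one finishes via the Corollary applied in a model where $\lambda$ has become a successor of a singular cardinal.

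The main obstacle — and the reason the parameters ``$\lambda$-absolutely $\lambda$-c.c.'', ``uniformly $\lambda$-dense'', and ``$\lambda^+$-saturated'' are needed rather than something weaker — is controlling the interaction of the two ideals: one must show that the structure witnessed by $J$ survives alongside, or transfers through, the small collapse $\mathbb P$ coming from the dense ideal on $\kappa$, and must compute precisely where the resulting embedding sends $\kappa$ and $\lambda$. Carrying out that bookkeeping is the real content of the argument in \cite{rims}.
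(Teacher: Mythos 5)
First, a point of reference: the paper does not actually prove this theorem---it is stated as imported from \cite{rims}, and the paper explicitly declines even to repeat the definitions of ``$\lambda$-absolutely $\lambda$-c.c.''\ and ``uniformly $\lambda$-dense.''\ So there is no in-paper proof to match your proposal against; it can only be judged on its own terms, and on those terms it has a genuine gap. Your setup (two generic embeddings $j$ from the dense ideal on $\kappa$ and $k$ from the ideal on $[\lambda]^\kappa$, the observation that $\mu$ must be regular, the reflection of the dense-ideal hypothesis through $k$ to get $N\models k(\kappa)=\mu^+$) is a reasonable frame, but the decisive step is missing: you write that pairing the small collapse with the collapse-like behavior of $\mathbb Q$ ``produces a polarized partition relation for $(\lambda,\kappa)$ together with its negation \ldots exactly as in Foreman's original argument,'' and then concede that ``carrying out that bookkeeping is the real content of the argument in \cite{rims}.'' That bookkeeping \emph{is} the theorem. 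Foreman's original argument is specific to $\omega_1,\omega_2$ and, as the paper notes, uses CH; the entire point of this generalization is to remove CH and to handle arbitrary $\kappa<\lambda$, including limit $\lambda$. Asserting that the same partition argument goes through is precisely the claim that needs proof, and nothing in your outline supplies it---in particular you never use the hypotheses ``$\lambda$-absolutely $\lambda$-c.c.''\ or ``uniformly $\lambda$-dense'' in any concrete way, only as labels for effects you hope they have.

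Case (2) as sketched is also not sound. You claim ``the dense-ideal side forces $\lambda$ to be the successor of $\mu$,'' which implicitly requires $k(\kappa)=\lambda$; but for a $\lambda^+$-saturated normal ideal on $[\lambda]^\kappa$ with $\lambda$ a limit cardinal, the hypothesis gives you only coarse information relating $k(\kappa)$ and $\lambda$ (e.g.\ that $|\lambda|$ is at most $k(\kappa)$ in the extension), not the identity $k(\kappa)=\lambda$, and you give no argument for it. The proposed finish---``one finishes via the Corollary applied in a model where $\lambda$ has become a successor of a singular cardinal''---is likewise unjustified: the Corollary rules out $\kappa$-dense $\kappa$-complete ideals on successors of singulars \emph{in the model at hand}, and you have not exhibited such an ideal in any identified model; you would need to verify that the reflected ideal $k(I)$, or some ideal derived from it, really lives in a model where its underlying cardinal is the successor of a singular and retains the required completeness and density. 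In short, the proposal is a plausible strategy outline rather than a proof: the combinatorial core in case (1) is deferred to the very reference being reproduced, and case (2) rests on an unestablished identification of $k(\kappa)$ with $\lambda$.
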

Although we will not repeat here the definitions of $\lambda$-absolutely $\lambda$-c.c.\ and uniformly $\lambda$-dense, we note the conjunction of these properties covers a broad class of partial orders that includes $\col(\mu,{<}\lambda)$ when $\lambda$ is regular and $\lambda^{<\mu} = \lambda$.
If a proponent of generic large cardinals as axioms is troubled by the more specific inconsistency, then certainly this generalization should be all the more disconcerting, as so many more pairs of generic large cardinals are ruled out.  In particular, dense ideals on successor cardinals are inconsistent with any reasonably saturated ideal which collapses an inaccessible to be that same successor.  As noted in \cite{rims}, stronger properties of ideals that generically map successor to inaccessible cardinals can rule out even saturated ideals on those successor cardinals.

After discussing the above mutual inconsistency phenomenon in \cite{foremanhandbook}, Foreman defined a particularly strong notion of ``minimally generically $n$-huge'' as an example of a type of generic large cardinal axiom:

\begin{definition}
For finite $n >0$,  a cardinal $\kappa = \mu^+$ is \emph{minimally generically $n$-huge} iff there
is a normal, fine, $\kappa$-complete ideal $I$ on $Z = [\kappa^{+n}]^{\kappa^{+n-1}}$ such that $\p(Z)/I$ has a dense
set isomorphic to $\col(\mu,\kappa)$.
\end{definition}

If we additionally assume that $\kappa^{<\mu} \leq \kappa^{+n}$,
then whenever $G \subseteq \p(Z)/I$ is generic, there is an elementary embedding $j : V \to M \subseteq V[G]$ such that
$M$ is closed under $\kappa^{+n}$-sequences from $V[G]$, and
$j(\kappa^{+m}) = \kappa^{+m+1}$ for all $m<n$.
Since $\col(\mu,\kappa)$ actually collapses $\kappa^{<\mu}$ to $\mu$, and $(\kappa^+)^V$ is a cardinal in $V[G]$,
we must have in this context that $\kappa^{<\mu} = \kappa$, meaning that the forcing associated to the ideal is of minimal possible density,
and is in fact uniquely characterized as the $\mu$-closed partial order of this density with these collapsing effects.\footnote{See \cite{cummingshandbook}, Section 14.}   Furthermore, a deeper theorem of Foreman
shows that in the case $\mu = \omega$, the hypothesis implies CH and in fact GCH up to $\kappa^{+n}$.\footnote{See \cite{foremanhandbook}, Theorem 5.9.}  
Using a simple trick, we can draw the same conclusion
for larger values of $\mu$.\footnote{See \cite{dense}, Corollary 3.4.  If $\kappa = \mu^+$ and there is a $\kappa$-complete, normal, $\kappa$-dense ideal on $Z$ but $2^\mu > \kappa$, then we can collapse $\mu$ to $\omega$ and preserve all of these properties, getting a model that is ruled out by Foreman's Theorem.}

Along these lines, it is natural to define a cardinal $\kappa = \mu^+$ to be \emph{minimally generically almost-huge} if there is a $\kappa$-complete ideal $I$ on $\kappa$ such that $\p(\kappa)/I$ has a dense set isomorphic to $\col(\mu,\kappa)$.
A standard projection argument shows that if $\kappa = \mu^+$, $\kappa^{<\mu} = \kappa$, $\kappa$ is minimally generically $n$-huge, and $0 < m \leq n$, then $\kappa$ is minimally generically $m$-huge and minimally generically almost-huge.

As \cite{foremanhandbook} went to press, Woodin showed in an unpublished note that it is inconsistent for $\omega_1$ to be minimally generically 3-huge while $\omega_3$ is minimally generically 1-huge.  Woodin's argument was somewhat specific to the cardinals involved.  We prove below the following generalization of his result using a different argument.


\begin{theorem}
 \label{genhuge}
Suppose $\kappa = \mu^+$ and $n \geq 2$.  Then the following are mutually inconsistent:
\begin{enumerate}
 \item $\kappa$ is minimally generically $n$-huge.
 \item For some $m$, $0 < m < n$, there is an ideal $I$ on $\kappa^{+m}$ such that $\p(\kappa^{+m})/I$ is forcing-equivalent to a $\kappa$-closed partial order.\footnote{With minor modifications to the arguments, ``$\kappa$-closed'' can be weakened to ``$\kappa$-strategically-closed.''}
\end{enumerate}
\end{theorem}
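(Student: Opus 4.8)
The plan is to force with the ideal witnessing (1), carry the ideal of (2) through the resulting generic embedding, and then realize a further generic ultrapower to reach a contradiction.

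Assume (1) and (2) both hold, and fix $m$ with $0 < m < n$ together with the ideal $I$ on $\kappa^{+m}$ provided by (2); let $\mathbb R$ be a $\kappa$-closed poset forcing-equivalent to $\p(\kappa^{+m})/I$. First I would record the arithmetic consequences of (1): by Foreman's theorem on GCH below minimally generically huge cardinals, the reduction from $\mu$ to $\omega$ used in the footnote above, and the Corollary (via Sakai's Lemma), $\mu$ is regular and GCH holds up to $\kappa^{+n}$; in particular $\kappa^{<\mu}=\kappa$, so by the standard projection argument $\kappa$ is minimally generically $(m+1)$-huge, witnessed by a normal, fine, $\kappa$-complete ideal $I_0$ on $Z := [\kappa^{+m+1}]^{\kappa^{+m}}$ whose quotient has a dense set isomorphic to $\col(\mu,\kappa)$. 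Forcing with $\p(Z)/I_0$ produces a generic $G$ and an elementary $j : V \to M \subseteq V[G]$ with $\crit(j)=\kappa$, $j(\kappa^{+i}) = \kappa^{+i+1}$ for all $i \leq m$, and $M^{\kappa^{+m+1}} \cap V[G] \subseteq M$.

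Next I would transport the ideal of (2) into $M$. By GCH, $|\mathbb R| \leq 2^{\kappa^{+m}} = \kappa^{+m+1}$, so $I$ and $\mathbb R$, coded as subsets of $\kappa^{+m+1}$, lie in $M$, which moreover computes $\p(\kappa^{+m})^{V}$ and the quotient $\mathbb R$ correctly. Applying $j$, inside $M$ the set $j(I)$ is an ideal on $j(\kappa^{+m}) = \kappa^{+m+1}$ whose quotient $j(\mathbb R) \cong \p(\kappa^{+m+1})^{M}/j(I)$ is $j(\kappa)$-closed, i.e.\ $\kappa^+$-closed; and since $\kappa$-closedness yields in particular $\sigma$-closedness, $j(I)$ is precipitous in $M$. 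Now force over $V[G]$ with $j(\mathbb R)$ to obtain an $M$-generic filter $\hat H$; precipitousness gives an elementary $k : M \to N$ with $\crit(k) = \kappa^{+m+1}$ and $N$ closed under sequences of length $\leq \kappa$ from $M[\hat H]$. The composition $k\circ j : V \to N$ is then elementary with $\crit(k\circ j) = \kappa$, $(k\circ j)(\kappa^{+i}) = \kappa^{+i+1}$ for $i < m$, and $(k\circ j)(\kappa^{+m}) = k(\kappa^{+m+1}) > \kappa^{+m+1}$; moreover $k$ takes the $(m+1)$-huge ideal $I_0 \in M$, which lives on a set whose ``top'' is exactly $\crit(k) = \kappa^{+m+1}$, to an ideal on $[k(\kappa^{+m+1})]^{\kappa^{+m}}$ still with quotient $\col(\mu,\kappa)$.

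The contradiction would then be extracted by comparing, inside $N$, the closing-off and collapsing behaviour of $j(\mathbb R) = \p(\kappa^{+m+1})^M/j(I)$ against what the uniqueness clause in the definition of minimally generically huge forces about the cardinals just below $k(\kappa^{+m+1})$: applying $j$ to ``$\mathbb R$ collapses $\kappa^{+m}$ down to its image of the least ordinal it affects'' and combining with Sakai's Lemma (as in the Corollary above) forces $N$ to regard $\kappa^{+m+1}$ simultaneously as a genuine successor cardinal carrying the $(m+1)$-huge structure of $k\circ j$ and as an ordinal singularized to cofinality below $\kappa$. Reading this back along $j$ shows (1) and (2) cannot both hold. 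I expect the real obstacle to be the closure bookkeeping in the previous paragraph: $\mathbb R$ is only $\kappa$-closed in $V$, and the forcing of (1) is the collapse $\col(\mu,\kappa)$, which adds new $\mu$-sequences ($\mu < \kappa$), so neither $\mathbb R$ nor $j(\mathbb R)$ need retain their closure in $V[G]$; one must work with $j(\mathbb R)$ as computed in $M$, where the $\kappa^+$-closure does survive, and verify that $N$ inherits enough of it to run the comparison. The delicate case is $m = n-1$, where $|j(\mathbb R)|^{M}$ can reach $\kappa^{+n+1}$, just past the closure of $M$, so $\hat H$ cannot be built inside $V[G]$ and the additional forcing must be shown to preserve enough of the hugeness of $\kappa$; this is also where the weakening in the footnote (``$\kappa$-closed'' to ``$\kappa$-strategically-closed'') would have to be folded in.
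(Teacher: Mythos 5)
Your setup matches the paper's reduction: derive GCH on $[\mu,\kappa^{+n}]$, pass to a generic embedding $j : V \to M \subseteq V[G]$ with $j(\kappa^{+i})=\kappa^{+i+1}$ for $i\leq m$ and $M$ closed under $\kappa^{+m+1}$-sequences, apply $j$ to $I$ to get an ideal on $\kappa^{+m+1}$ whose quotient is $\kappa^+$-closed in $M$ (hence in $V[G]$, by closure), and then take a second generic ultrapower with critical point $\kappa^{+m+1}$. Up to that point you are on the paper's track, and your worries about closure bookkeeping are largely red herrings: one never needs the second generic filter to live in $V[G]$ (one simply forces over $V[G]$), and the $\kappa^+$-closure of $j(\mathbb R)$ in $V[G]$ follows from $M$ being closed under $\kappa$-sequences, regardless of how large $j(\mathbb R)$ is.

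The genuine gap is the final step: no contradiction is actually derived, and the route you sketch does not work. The forcing $j(\mathbb R)$ is $\kappa^+$-closed in $V[G]$, so it singularizes nothing to cofinality below $\kappa$; the fact that the second ultrapower $N$ sees $\kappa^{+m+1}$ as having cardinality $\kappa^{+m}$ is exactly what any generic ultrapower with that critical point produces and is not contradictory; and Sakai's Lemma applies only to a forcing whose size equals a regular cardinal whose cofinality is then forced to drop, which is not the configuration here. The paper's contradiction comes from an entirely different mechanism that your proposal is missing: by GCH, $(\kappa^{+m})^{<\kappa^{+m}}=\kappa^{+m}$, so Specker's theorem gives a $\kappa^{+m+1}$-Aronszajn tree $T \in V$; the second generic ultrapower has critical point $\kappa^{+m+1}$ and hence adds a cofinal branch $b$ through $T$ (look below a node of $i(T)$ at level $(\kappa^{+m+1})^V$), and any such branch satisfies $b \cap x \in V$ for all $x \in \p_{\kappa^+}(T)^V$ while $b \notin V$. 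This violates the $\kappa^+$-approximation property, which the two-step iteration $\col(\mu,\kappa) * \p(\kappa^{+m+1})/j(I)$ must have by Usuba's theorem, since $\col(\mu,\kappa)$ has size $\kappa$ (so is strongly $\kappa^+$-c.c.) and the second factor is named as a $\kappa^+$-closed forcing. Without the Aronszajn tree and the approximation property, the composition $k \circ j$ and the "uniqueness clause" comparison you propose give no contradiction.
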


The second hypothesis includes the case that $\kappa^{+m}$ is minimally generically almost-huge, but is much weaker, and in fact equiconsistent with a measurable cardinal.\footnote{For example, if we force with $\col(\mu,{<}\kappa)$, where $\kappa$ is measurable, then the ideal $I$ generated by the dual of a normal ultrafilter $\mathcal U$ on $\kappa$ has the property that $\p(\kappa)/I$ has a dense set isomorphic to $\col(\mu,{<}j_{\mathcal U}(\kappa))$.  For the reverse direction, such an ideal is \emph{precipitous}, which implies that $\kappa$ is measurable in an inner model.}
The second hypothesis also implies that $(\kappa^{+m-1})^\mu = \kappa^{+m-1}$, since otherwise, a generic embedding arising from $I$ would stretch an enumeration of $\p_\kappa(\kappa^{+m-1})$ to reveal new elements in the generic extension.  But a $\kappa$-closed partial order does not add any ${<}\kappa$-sequences of ordinals.  \emph{A fortiori}, $\kappa^{<\mu} \leq \kappa^{+n}$, and so by the prior remarks, the two hypotheses together imply GCH holds on the interval $[\mu,\kappa^{+n}]$.

The theorem shows that degrees of minimal generic hugeness on nearby successor cardinals contradict one another.
It will follow from a more general lemma that is a bit clumsier to state.  Before beginning the argument, we gather some notions and facts we will need:

\begin{definition}[Hamkins \cite{hamkins}]
 A partial order $\mathbb P$ has the \emph{$\kappa$-approximation property} when for all $X \in V$ and all $\mathbb P$-names $\dot{Y}$, 
 if it is forced that $\dot Y \cap x \in V$ for all $x \in \p_\kappa(X)^V$, then it is forced that $\dot Y \cap X \in V$.
\end{definition}

\begin{definition}[Usuba \cite{usubaapprox}]
 A partial order has the \emph{strong $\kappa$-c.c.}\ if it has the $\kappa$-c.c.\ and forcing with it adds no cofinal branches to $\kappa$-Suslin trees.
\end{definition}

We note two ways of guaranteeing that $\mathbb P$ has the strong $\kappa$-c.c.: $\mathbb P$ has the $\mu$-c.c. for some $\mu < \kappa$, or $\mathbb P \times \mathbb P$ is $\kappa$-c.c.  The latter follows easily if $|\mathbb P| < \kappa$.
The following result of Usuba \cite{usubaapprox} improves results of Hamkins, Mitchell, and Unger:

\begin{theorem}[Usuba]
 Suppose $\kappa$ is a regular cardinal, $\mathbb P$ is a nontrivial $\kappa$-c.c.\ partial order, and $\dot{\mathbb Q}$ is a $\mathbb P$-name for a $\kappa$-closed partial order.  Then $\mathbb P * \dot{\mathbb Q}$ has the $\kappa$-approximation property if and only if $\mathbb P$ has the strong $\kappa$-c.c.
\end{theorem}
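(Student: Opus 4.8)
The plan is to prove the two implications separately; write $\mathbb R = \mathbb P * \dot{\mathbb Q}$ and fix a $\mathbb P$-generic $G$, so that $V[G] = V^{\mathbb P}$ and $\mathbb Q = \dot{\mathbb Q}^G$ is $\kappa$-closed there.

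For the direction that the $\kappa$-approximation property \emph{fails} when $\mathbb P$ lacks the strong $\kappa$-c.c., I would argue the contrapositive directly. Since $\mathbb P$ is assumed $\kappa$-c.c., failure of the strong $\kappa$-c.c.\ means that $\mathbb P$ adds a new cofinal branch $b$ to some $\kappa$-Suslin tree $T \in V$. I claim $b$ is already a witness that $\mathbb R$ fails the $\kappa$-approximation property, taking $X = T$ (regarded as its set of nodes) and $\dot Y$ a name for the set of nodes lying on $b$. Indeed, if $x \in \p_\kappa(T)^V$ then, as $\kappa$ is regular and $T$ has levels of size less than $\kappa$, the set $x$ meets only boundedly many levels, say $x \subseteq T \restriction \alpha$ with $\alpha < \kappa$; but $b \cap (T \restriction \alpha)$ is just the set of $T$-predecessors of the single node of $b$ at level $\alpha$, hence lies in $V$, so $b \cap x \in V$. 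On the other hand $b \cap T = b \notin V$. Since $b$ already belongs to $V[G]$, which is contained in the full extension, and $V$ is fixed, $b$ remains such a counterexample after the further $\kappa$-closed forcing $\dot{\mathbb Q}$, so $\mathbb R$ fails the property.

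For the converse, assume $\mathbb P$ has the strong $\kappa$-c.c., and suppose toward a contradiction that some condition forces $\dot Y$ to be a \emph{fresh} name: coding the target set by an ordinal, $\dot Y \subseteq \delta$ with $\dot Y \cap a \in V$ for every $a \in \p_\kappa(\delta)^V$, yet $\dot Y \notin V$. Using that a $\kappa$-c.c.\ forcing has the $\kappa$-cover property --- every $a \in \p_\kappa(\delta)^{V[G]}$ is contained in some $b \in \p_\kappa(\delta)^V$ --- one first upgrades the hypothesis so that $\dot Y \cap a \in V$ for all small $a$ computed in $V[G]$ as well. The heart of the argument is then to build, in $V$, a tree $T$ of height $\kappa$ whose level-$\alpha$ nodes are the possible values of $\dot Y \restriction \alpha$. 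Here the $\kappa$-closure of $\dot{\mathbb Q}$ is used decisively: because a $\kappa$-closed forcing never has to split in order to pin down a piece of size less than $\kappa$, two distinct possible values of $\dot Y \restriction \alpha$ can be traced back to genuinely incompatible conditions of $\mathbb P$, so the $\kappa$-c.c.\ of $\mathbb P$ bounds both the levels and the antichains of $T$ below $\kappa$. Thus $T$ is a $\kappa$-tree with no antichain of size $\kappa$, the freshness of $\dot Y$ supplies a cofinal branch of $T$ that is new over $V$ and is already added at the $\mathbb P$-stage, and --- once $T$ is seen to have no cofinal branch in $V$ --- $T$ is $\kappa$-Suslin, contradicting the strong $\kappa$-c.c.

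The hard part is precisely this last construction, and two features demand care. First, a $\kappa$-closed forcing on its own fails the $\kappa$-approximation property --- $\add(\kappa,1)$ adds a fresh subset of $\kappa$ --- so one cannot first argue that $\dot Y \in V[G]$ and then invoke a property of $\mathbb P$ in isolation; the nontriviality and $\kappa$-c.c.\ of $\mathbb P$ must be combined with the closure of $\dot{\mathbb Q}$ to show that the branching responsible for the freshness of $\dot Y$ is located at the $\mathbb P$-stage and keeps the levels of $T$ small. This is the gap-forcing mechanism of Hamkins, refined here to expose a Suslin tree. Second, to apply the strong $\kappa$-c.c.\ one needs $T$ genuinely $\kappa$-Suslin rather than merely $\kappa$-Aronszajn; I would secure the absence of a ground-model cofinal branch by taking $\delta$ least such that some name is fresh, so that every proper restriction of $\dot Y$ is forced into $V$, and by pruning $T$ to the nodes lying below conditions that force $\dot Y \notin V$, so that a cofinal branch of $T$ in $V$ would reflect to a ground-model value of $\dot Y$, contradicting freshness.
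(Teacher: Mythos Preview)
The paper does not prove this theorem at all: it is quoted from Usuba's paper \cite{usubaapprox} and used as a black box in the proof of the subsequent lemma. So there is no ``paper's own proof'' to compare against; what follows are comments on the soundness of your sketch on its own terms.

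Your easy direction (failure of the strong $\kappa$-c.c.\ $\Rightarrow$ failure of approximation) is correct and is exactly how one argues: a new branch through a $\kappa$-Suslin tree is automatically a fresh set, since any ground-model set of size ${<}\kappa$ lives below a bounded level.

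The hard direction has the right overall architecture---manufacture a $\kappa$-Suslin tree in $V$ out of a putative fresh $\mathbb R$-name and derive a contradiction from the strong $\kappa$-c.c.---but two of the steps you lean on are not justified as written. First, the assertion that ``two distinct possible values of $\dot Y\restriction\alpha$ can be traced back to genuinely incompatible conditions of $\mathbb P$'' is false if taken literally: over a fixed $\mathbb P$-generic $G$, the $\kappa$-closed poset $\mathbb Q$ can certainly have incompatible conditions forcing different ground-model values for $\dot Y\restriction\alpha$ (think of $\mathbb Q=\add(\kappa,1)$), so the tree of \emph{all} possible initial segments need not have levels of size ${<}\kappa$. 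The actual argument does not take that tree; one builds $T$ by a fusion-style recursion, using the \emph{nontriviality} of $\mathbb P$ to split on the $\mathbb P$-coordinate and the $\kappa$-closure of $\dot{\mathbb Q}$ to carry the $\mathbb Q$-coordinate through limit stages, arranging along the way that incomparable nodes of $T$ sit below incompatible $\mathbb P$-conditions. Your sketch never invokes nontriviality, and without it the construction cannot get off the ground (indeed, if $\mathbb P$ is trivial the theorem fails). Second, the claim that the branch determined by $\dot Y$ ``is already added at the $\mathbb P$-stage'' is not established and is not generally true; the contradiction with the strong $\kappa$-c.c.\ is obtained differently, by showing that the constructed $T$ is $\kappa$-Suslin in $V$ and that $\mathbb P$ (not $\mathbb P*\dot{\mathbb Q}$) must add a branch to it, which requires the careful bookkeeping just described rather than an appeal to where $\dot Y$ itself lives.
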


The next lemma implies Theorem \ref{genhuge}.  As noted above, the hypotheses of Theorem \ref{genhuge} imply GCH on the interval $[\mu,\kappa^{+n}]$, and thus the hypotheses of the next lemma are satisfied with $\mathbb P = \col(\mu,\kappa)$, $\kappa = \kappa_0$, $\kappa_1= \kappa^+$, $\lambda_0 = \kappa^{+m-1}$, and $\lambda_1= \kappa^{+m}$.

\begin{lemma}
 Suppose there are regular cardinals $\kappa_0,\kappa_1,\lambda_0,\lambda_1$ such that $\lambda_0^{<\lambda_0} = \lambda_0$ and $\kappa_0 \leq \lambda_0$.
 Suppose $\mathbb P$ is a strongly $\kappa_1$-c.c.\ nontrivial partial order such that whenever $G \subseteq \mathbb P$ is generic, there is an elementary embedding $j : V \to M \subseteq V[G]$ with:
 \begin{enumerate}
  \item $j(\kappa_0) = \kappa_1$.
    \item $j(\lambda_0) = \lambda_1$.
  \item $M$ is closed under $\lambda_1^+$-sequences from $V[G]$.
 \end{enumerate}
 Then there is no $\lambda_0^+$-complete ideal $I$ on $\lambda_0^+$ whose associated forcing is equivalent to a $\kappa_0$-closed partial order.\footnote{If $\kappa_0 = \lambda_0$, we may drop the cardinal arithmetic assumption, as the existence of a $\kappa_0^+$-complete ideal on $\kappa_0^+$ whose associated forcing is equivalent to a $\kappa_0$-closed partial order implies $\kappa_0^{<\kappa_0} = \kappa_0$.}

\end{lemma}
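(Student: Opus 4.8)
The plan is to suppose the ideal $I$ exists, derive the generic embedding $j$ from $\mathbb{P}$, and then manufacture — inside a further forcing extension — an elementary embedding of $V$ that is incompatible with a $\kappa_1$-approximation property supplied by Usuba's theorem. Fix a $\kappa_0$-closed partial order $\mathbb{R}$ forcing-equivalent to $\p(\lambda_0^+)/I =: \mathbb{Q}$, force with $\mathbb{P}$ to obtain $G$ and $j : V \to M \subseteq V[G]$ witnessing (1)--(3), and work in $V[G]$.

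First I would set up the approximation property. By elementarity, $j(\mathbb{R}) \in M$ is $\kappa_1$-closed in $M$ and is forcing-equivalent there to $\p(\lambda_1^+)/j(I)$, where $\lambda_1^+ = j(\lambda_0^+) = j(\lambda_0)^+$ and, since $M$ is closed under $\lambda_1^+$-sequences from $V[G]$ and $\mathbb{P}$ is $\kappa_1$-c.c., $(\lambda_1^+)^M = (\lambda_1^+)^{V[G]} = (\lambda_1^+)^V$. Using $\kappa_1 \le \lambda_1$ and the $\lambda_1^+$-closure of $M$, one checks that $j(\mathbb{R})$ is still $\kappa_1$-closed in $V[G]$ (a ${<}\kappa_1$-decreasing sequence in it lying in $V[G]$ already lies in $M$, which supplies a lower bound) and that $\p(\lambda_1^+)^M = \p(\lambda_1^+)^{V[G]}$, so $j(\mathbb{R})$ remains forcing-equivalent to the quotient of $j(I)$ as computed in $V[G]$. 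Let $\dot{\mathbb{S}}$ be a $\mathbb{P}$-name for $j(\mathbb{R})$. Since $\mathbb{P}$ is nontrivial and strongly $\kappa_1$-c.c., Usuba's theorem gives that $\mathbb{P} * \dot{\mathbb{S}}$ has the $\kappa_1$-approximation property; so if $\bar H$ is $V[G]$-generic for $j(\mathbb{R})$, then $W := V[G][\bar H]$ has the $\kappa_1$-approximation property over $V$.

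Next I would build the embedding. As $\bar H$ is $M$-generic and $j(\mathbb{R})$ is $\kappa_1$-closed (hence $\sigma$-closed) in $M$, the forcing adds no $\omega$-sequence of $M$-elements; combined with the $\lambda_1^+$-completeness of the $M$-generic ultrafilter $U_{\bar H}$ on $\p(\lambda_1^+)^M$ derived from $\bar H$, this shows the generic ultrapower $k : M \to N' \subseteq M[\bar H] \subseteq W$ is well-founded, with $\crit(k) = \lambda_1^+$ and $N'$ closed under $\lambda_1$-sequences from $M$. Put $\ell := k \circ j : V \to N'$. The relevant facts are: $\crit(\ell) = \kappa_0$ and $\ell(\kappa_0) = \kappa_1$ (since $\kappa_1 < \lambda_1^+ = \crit(k)$); $\ell \restriction \lambda_0^+ = j \restriction \lambda_0^+$ (since $\ran(j \restriction \lambda_0^+) \subseteq j(\lambda_0^+) = \lambda_1^+ = \crit(k)$); $\ell(\lambda_0^+) = k(\lambda_1^+) > \lambda_1^+$; and, combining the $\lambda_1$-closure of $N'$ over $M$, the $\lambda_1^+$-closure of $M$ over $V[G]$, and the $\kappa_1$-distributivity of $j(\mathbb{R})$ over $V[G]$, the model $N'$ is closed under ${<}\kappa_1$-sequences of ordinals in $W$. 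Thus $\ell$ is a nontrivial elementary embedding of $V$ into a ${<}\kappa_1$-closed (in $W$) inner model $N'$ of $W$ with critical point $\kappa_0 < \kappa_1$, while $W$ has the $\kappa_1$-approximation property over $V$.

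The final step is to extract a contradiction from this configuration, and this is where the main work lies. Because $\crit(\ell)=\kappa_0<\kappa_1$, the embedding $\ell$ moves ordinals strictly between $\kappa_0$ and $\kappa_1$, so it carries information not present in $V$; on the other hand, the restriction of $\ell$ to a suitable $V$-object $X$ of size $\lambda_0$ recording the action of $\ell$ on $\p_{\kappa_0}(\lambda_0)^V$ (here using $\lambda_0^{<\lambda_0}=\lambda_0$, so $|X|=\lambda_0$) lies in $N'\subseteq W$, and, crucially, all of its ${<}\kappa_1$-sized restrictions lie already in $V$ — one must verify that these small restrictions involve only the ``$j$-part'' of $\ell$ (since $k$ has critical point far above) and that they descend to $V$ rather than merely to $V[G]$, using the strong $\kappa_1$-c.c.\ of $\mathbb{P}$. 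The $\kappa_1$-approximation property of $W$ over $V$ then forces $\ell\restriction X\in V$; but decoding $\ell\restriction X$ inside $V$ reconstructs enough of the generic embedding there to contradict either the nontriviality of $\mathbb{P}$ or the $\lambda_0^+$-completeness of $I$. Pinning down exactly which object $X$ to approximate, and verifying the descent of its small restrictions to $V$, is the delicate point of the argument; the degenerate case $\kappa_0=\lambda_0$ is treated separately, since there the existence of $I$ already yields $\kappa_0^{<\kappa_0}=\kappa_0$, which is precisely the cardinal-arithmetic input needed, so that assumption may be dropped in that case.
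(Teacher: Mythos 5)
Your setup is on the right track and matches the paper's strategy up to a point: you correctly identify that $j(I)$ reflects to a $\lambda_1^+$-complete ideal on $\lambda_1^+$ in $V[G]$ whose quotient is $\kappa_1$-closed there, and that Usuba's theorem applied to the two-step iteration $\mathbb P * \p(\lambda_1^+)/j(I)$ gives the $\kappa_1$-approximation property over $V$. The problem is the final step, which you yourself flag as not pinned down: you never exhibit a concrete set violating approximation. The object you propose --- a restriction $\ell \restriction X$ of the composite embedding --- is the wrong target. Since $\ell \restriction \lambda_0^+ = j \restriction \lambda_0^+$ and $j$ is itself generic over $V$ via the nontrivial forcing $\mathbb P$, there is no reason for the ${<}\kappa_1$-sized restrictions of $j \restriction X$ to lie in $V$: a strongly $\kappa_1$-c.c.\ forcing freely adds new sets of size ${<}\kappa_1$ (e.g.\ $\col(\omega,\omega_1)$ is strongly $\omega_2$-c.c.\ and adds countable sets of ordinals), so the hoped-for ``descent to $V$'' fails, and without it the approximation property yields nothing. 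Moreover, even if $\ell\restriction X\in V$ were somehow obtained, the claimed contradiction with ``the nontriviality of $\mathbb P$ or the $\lambda_0^+$-completeness of $I$'' is asserted rather than derived.

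The paper closes the gap with a different witness. From $\lambda_0^{<\lambda_0}=\lambda_0$, elementarity, and the $\lambda_1^+$-closure of $M$, one gets $\lambda_1^{<\lambda_1}=\lambda_1$ in $V$, hence by Specker's theorem a $\lambda_1^+$-Aronszajn tree $T\in V$. The second-step generic embedding $i : V[G]\to N$ has critical point $(\lambda_1^+)^V=j(\lambda_0^+)$, so $i(T)\restriction(\lambda_1^+)^V=T$ and the predecessors of any node of $i(T)$ at level $(\lambda_1^+)^V$ give a cofinal branch $b$ of $T$ in the two-step extension. For each $x\in\p_{\kappa_1}(T)^V$ the set $b\cap x$ is contained in the predecessor set of a single node of $T$ and hence lies in $V$, while $b\notin V$ because $T$ is Aronszajn there; this is the violation of $\kappa_1$-approximation. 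Note that this is where the hypothesis $\lambda_0^{<\lambda_0}=\lambda_0$ actually does its work; in your sketch it enters only to bound $|X|$, which is a sign that the intended mechanism has been missed.
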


\begin{proof}
Suppose $j : V \to M \subseteq V[G]$ is a generic elementary embedding as hypothesized.
By the closure of $M$ and the chain condition, $\lambda_1$ is regular in $V[G]$, and $j(\lambda_0^+) = (\lambda_1^+)^{V[G]} = (\lambda_1^+)^V$.

Since $V \models \lambda_0^{<\lambda_0} = \lambda_0$, it follows that $V \models \lambda_1^{<\lambda_1} = \lambda_1$.  For otherwise, in $V$ there is a surjection $f : [\lambda_1]^{<\lambda_1} \to \lambda_1^+$, which would be in $M$ by the closure of $M$.  But by elementarity, $M \models \lambda_1^{<\lambda_1} = \lambda_1$.  Therefore, then by the well-known result of Specker \cite{specker}, there is a $\lambda_1^+$-Aronszajn tree $T \in V$.

Suppose $I$ is as hypothesized.  $M \models j(I)$ is a $\lambda_1^+$-complete ideal on $\lambda_1^+$ whose associated forcing is equivalent to a $\kappa_1$-closed partial order.
By the closure of $M$, this is true in $V[G]$ as well.  If we force with $\p(\lambda_1^+)/j(I)$ over $V[G]$, then we get an elementary embedding
$i : V[G] \to N$ with critical point $\lambda_1^+$.  We have $i(T) \restriction (\lambda_1^+)^V = T$, and a branch through $T$ can be found by looking below a
node in $i(T)$ at level $(\lambda_1^+)^V$.  Therefore, by forcing with $\mathbb P * \p(\lambda_1^+)/j(I)$, we obtain a new set $b \subset T$ such that for every
$x \in \p_{\kappa_1}(T)^V$, $b \cap x \in V$.  This is a failure of $\kappa_1$-approximation, in contradiction to Usuba's Theorem.
\end{proof}

In summary, many pairs of seemingly natural candidates for generic large cardinal axioms turn out to contradict one another.  A fixed successor cardinal cannot be generically large in certain different ways at the same time, where the nature of the forcing producing the generic embedding varies.  A canonical notion of minimal generic $n$-hugeness cannot hold simultaneously on nearby successor cardinals.  The pervasiveness of such examples, combined with the absence of any apparent salient difference between conflicting hypotheses, lowers hopes for a single consistent template for such axioms.  We are thus compelled to scrutinize the possible primary considerations in favor of these principles.

\section{Weight of the evidence}
Do the primary considerations for conventional large cardinals really apply equally to generic large cardinals?  
The key difference between conventional and generic large cardinals is the admissibility of forcing to construct a nontrivial elementary embedding of $V$ into a transitive class.  On its face, this involves the introduction of an object $G \notin V$ to form a larger model $V[G]$ in which the embedding can be defined.  If the axioms of set theory are meant to describe what is true in \emph{the} universe of sets $V$, this seems curious.  Generic large cardinals would seem to be principles about local regions of a set-theoretic multiverse.  Indeed, Foreman states in \cite{GLC} that a generic large cardinal hypothesis ``allows one to state `symmetry principles' that can hold in a generic extension of the universe."  Foreman makes no attempt to dress the motivating picture for generic large cardinals in a formalism that avoids reference to extensions of the universe, other than to say that the forcability of generic embeddings is equivalent to the existence of ideals in $V$ with certain properties.  But it is clearly the formulation in terms of generic embeddings, rather than combinatorial properties of ideals, that is the source of motivation for these principles.  Putting aside for the moment general worries about the relationship of generic embeddings to multiversism, let us examine several of the primary considerations for large cardinals, and assess whether they apply to the generic variety.

\subsection{Arguments from authority}
In \cite{foremanphil}, Foreman largely avoids stating what primary considerations in favor of large cardinals there are, opting instead to argue simply \emph{that they exist.}  He cites the fact that many expert set theorists treat large cardinal assumptions ``\emph{as if} they were true,'' which he says shows they ``have intuitions about the veridicality of the axioms."  Moreover, these intuitions are ``educated," and may be likened to the refined judgements of professional wine tasters, in the sense that one has reason to trust the experts' opinions even if it is difficult for non-experts to inspect the basis for the judgments.\footnote{Of course, it is somewhat controversial that such connoisseurs are really tracking some objective facts.}

However, this kind of evidentiary picture is not so similar with generic large cardinals.  Foreman acknowledges as much, noting that ``the generic large cardinals are much less studied than conventional large cardinals, and so it is hard to supply the same kind of historical or sociological evidence for the intuitive content of the large cardinals.''  We may also add that there are not many set theorists who appear to regard generic large cardinal principles as axiomatically true.  Foreman's aim in his several philosophical writings about generic large cardinals is to prescribe these as axioms to an audience that includes the community of set theorists, rather than to describe a existing consensus.  Even if a consensus among experts about propositions in their field of expertise counts as evidence in favor of those propositions, this kind of evidence is presently lacking for generic large cardinals.

\subsection{Generalization}
One motivating idea for many conventional large cardinals, discussed for example in \cite{srk}, is that the many properties of $\omega$ that are the result of the gap between the finite and infinite should generalize to higher infinities, displaying a similarly vast difference in size between varieties of infinite sets.  Insofar as such considerations are about sheer relative size, they do not seem to apply to generic large cardinals, where the focus is on \emph{accessible} cardinals such as the $\aleph_n$ for finite $n$.  On the other hand, generic large cardinals do indeed generalize the previously-studied large cardinals by introducing the possibility of elementary embeddings generated by forcing.  There does not appear to be a \emph{justification} lurking here as to why these kinds of generalizations should be true, but we grant that they may appear \emph{natural}.  However, the feeling of naturalness may be mitigated in the face of other compelling facts, such as in the case of Reinhardt cardinals.  We believe that the mathematical facts laid out in the previous section play a similar role.  As Foreman points out in \cite{foremanphil}, evolutions of views of what is natural in mathematics occur quite often, such as with the discovery of continuous but nowhere differentiable functions or sets of reals that are not Lebesgue-measurable.

\subsection{Reflection and resemblance}
In \cite{GLC}, Foreman asserts that a hypothesis that a small cardinal is generically large ``allows these cardinals to have similar reflection and resemblance properties as posited by large cardinal axioms on highly inaccessible cardinals."  These ideas appear in \cite{srk} as motivating principles for conventional large cardinals.  The idea of \emph{reflection} is that the mathematical universe is vast enough that it cannot be characterized by properties that hold in it, and instead these properties must already be satisfied by robust set-sized approximations to it, such as rank-initial segments $V_\alpha$.  Furthermore, this phenomenon should itself reflect, so that many $V_\alpha$ have their properties reflected by smaller $V_\beta$.  The idea of \emph{resemblance} is that, for the same kinds of reasons, many levels of the cumulative hierarchy, or many members of various other classes, should resemble one another, perhaps via mechanisms such as elementary embeddings.  An example of a precise implementation of this idea is Vop\v enka's Principle, which says that for any proper class of structures in the same language, there exists an elementary embedding from one member of the class into another.

Many kinds of generic large cardinals entail the reflection of various properties, and the reflection that one gets from a generic embedding is indeed a key component of many arguments involving these objects, as many examples in \cite{foremanhandbook} show.  Also, the embeddings themselves may be viewed as kinds of resemblance properties.  However, there are limitations.  The $\Pi_1$ statement that $\kappa$ is a cardinal fails to reflect on a final segment of $\alpha < \kappa$ if $\kappa$ is a successor cardinal.  Furthermore, there is a very simple sense in which the $\aleph_n$ for finite $n$ do \emph{not} resemble one another; each has a relatively simple definition.

Thus, the typical cardinals one considers as generically large cannot enjoy the kind of full-fledged reflection and resemblance that is possible at conventional large cardinals.  These observations can be seen as manifestations of the fact that the ``symmetry'' which appears via a generic embedding does not occur in $V$ but in an outer model.  The idea that the mathematical universe has structures too rich to be pinned down by such-and-such kinds of properties does not seem to motivate the generic largeness of small cardinals.  There \emph{are} plenty of resources for describing low levels of the cumulative hierarchy.  The resemblance between low-rank objects exhibited by generic a embedding only appears by changing the background universe and thus changing the properties of some objects.  While a principle asserting the occurrence of this kind of phenomenon may be well-motivated, it is not motivated by \emph{the same} ideas that are commonly put forward for conventional large cardinals.

\section{Multiversism}
The motivating picture for generic large cardinals is ostensibly about the relationship of $V$ to a generic extension $V[G]$.  This is in itself a big difference with conventional large cardinals, which are all unambiguously about one universe $V$.  We would like to suggest a way of saving generic large cardinals as axiomatic principles of a sort, in a way that embraces this difference.  The cost, however, is that we give up on using generic large cardinals to arrive at final answers to classical independent questions like CH.

Perhaps, to the working set theorist, generic embeddings have some intuitive appeal as first principles.  But because of mutual inconsistencies, it is difficult if not impossible to treat them as axiom candidates in the normal sense.  Given that they have their most appealing formulation in terms of a relationship between several models of set theory, it may be in their own spirit to state them in a way that allows the \emph{domain} of the elementary embedding to vary.  (To be fair, this possibility is already hinted at in Foreman's ``informal working definition,'' but it doesn't seem to have been seriously explored.)

One way of making sense of this is in the context of a pluralist approach to set-theoretic ontology.  This is the view that there is not a single correct mathematical universe, but a multiverse of many equally valid universes.  Not all universe-existence hypotheses have equal status.  For example, questions such as whether there are universes realizing various assignments of values to positions in the Cicho\'n diagram,\footnote{See, for example, \cite{cichon}.} or in which $\aleph_\omega$ is a J\'onsson cardinal,\footnote{See \cite{eisworth}.} are considered open questions in set theory to be settled on the basis of the existence of models of ZFC (plus conventional large cardinals).  Generic large cardinal principles may be formulated as existence principles for the multiverse.  For example, instead of asserting conjunctions ruled out by Theorem \ref{genhuge}, we could assert that there is a universe $V_1$ in which $\omega_1$ is minimally generically 2-huge, and another $V_2$ in which $\omega_2$ is minimally generically 1-huge.  A very similar approach that maintains a commitment to a single correct universe $V$ could be to formulate generic large cardinal axioms as asserting the existence of inner models of $V' \subseteq V$, along with generic objects $G$ over $V'$, generating generic embeddings $j: V' \to M \subseteq V'[G] \subseteq V$.

A potential utility to this approach is that it introduces more methods for tackling consistency problems.  A multiverse approach to generic large cardinal principles may provide a collection of well-motivated starting points for solutions to consistency questions that may not at present be answerable by other means.  The use of a conventional large cardinal assumption to prove the consistency of a theory $T$ is generally regarded as progress on (if not a complete solution to) the question of whether $T$ is consistent.  Of course, reducing the strength of the large cardinal assumption employed, or eliminating the use of large cardinals altogether, is a better result.  If $T$ can be shown \emph{equiconsistent} with ZFC plus a large cardinal, then, as seems to be universally agreed, this is the best one can say about the consistency of $T$.  In \cite{potent}, Foreman gives some examples of questions about graph theory and algebra that can be settled on the basis of generic large cardinal assumptions.  Since the consistency of many generic large cardinal assumptions, including those used in \cite{potent}, is currently not known to follow from conventional large cardinals, some nontrivial information seems to be obtained about these problems.  Though it may be preferable to solve these problems with conventional large cardinals, it is certainly of some value to find an argument from other reasonable hypotheses.  Another example whose history nicely bolsters this epistemic picture is described in \cite{fd2}.  Woodin first showed how to construct a uniform, countably complete, $\omega_1$-dense ideal on $\omega_2$, an object whose existence has many combinatorial consequences, from the assumption that both $\omega_1$ and $\omega_2$ are minimally generically almost-huge.  Foreman later showed the consistency of such an object assuming the consistency of ZFC with a huge cardinal (a type of conventional large cardinal).

It would be desirable to subsume these universe-existence hypotheses under one general principle about the nature of the set-theoretic multiverse.  Since investigations of consistency must often start from some strong assumptions, it would be valuable to have a general account of what starting assumptions are appropriate.  However, this is likely to be at least as difficult as unifying the conventional large cardinals under a single formal framework.  Thus we leave this task for future work.

\bibliographystyle{amsplain}
\bibliography{masterbib}

\end{document}